\newtheorem{theorem}{Theorem}
\theoremstyle{plain}
\newtheorem{definition}{Definition}
\newtheorem{example}{Example}
\newtheorem{proposition}{Proposition}
\numberwithin{equation}{section}
\begin{document}
\title{$A_{\infty}$-bialgebras of Type $\left(  m,n\right)  $}
\author{Ainhoa Berciano$^{1}$}
\address{Departamento de Did\'actica de la Matem\'atica y de las Ciencias Experimentales\\
Universidad del Pa\'{\i}s Vasco-Euskal Herriko Unibertsitatea\\
Ram\'{o}n y Cajal, 72.\\
48014-Bilbao(Bizkaia). SPAIN}
\email{ainhoa.berciano@ehu.es}
\author{Sean Evans}
\address{Department of Mathematics\\
University of Pittsburgh\\
Pittsburgh, PA 15260}
\email{sme26@pitt.edu}
\author{Ronald Umble$^{2}$}
\address{Department of Mathematics\\
Millersville University of Pennsylvania\\
Millersville, PA. 17551}
\email{ron.umble@millersville.edu}
\thanks{MSC2010:\ Primary 18D50 (operads); 57T30 (bar and cobar constructions)}
\thanks{$^{1}$This work was partially supported by \textquotedblleft Computational
Topology and Applied Mathematics\textquotedblright\ PAICYT research project
FQM-296, Spanish MEC project MTM2006-03722}
\thanks{$^{2}$This research was funded in part by a Millersville University faculty
research grant}
\date{January 6, 2010}
\keywords{$A_{\infty}$-algebra, $A_{\infty}$-bialgebra, $A_{\infty}$-coalgebra,
associahedron, bar and cobar constructions, Hopf algebra, S-U diagonal }

\begin{abstract}
An $A_{\infty}$\emph{-bialgebra} \emph{of type} $\left(  m,n\right)  $ is a
Hopf algebra $H$ equipped with a \textquotedblleft
compatible\textquotedblright\ operation $\omega_{m}^{n}:H^{\otimes
m}\rightarrow H^{\otimes n}$ of positive degree. We determine the structure
relations for $A_{\infty}$-bialgebras of type $\left(  m,n\right)  $ and
construct a purely algebraic example for each $m\geq2$ and $m+n\geq4.$

\end{abstract}
\maketitle

\section{Introduction}

Let $\Lambda$ be a (graded or ungraded) commutative ring with unity, and let
$m,n\in\mathbb{N}$ with $m+n\geq4$. An $A_{\infty}$\emph{-bialgebra} \emph{of
type} $\left(  m,n\right)  $ is a graded\ $\Lambda$-Hopf algebra $H$ equipped
with a \textquotedblleft compatible\textquotedblright\ multilinear operation
$\omega_{m}^{n}\in Hom^{m+n-3}\left(  H^{\otimes m},H^{\otimes n}\right)  $.
The first examples of type $\left(  1,3\right)  $ were given by H.J. Baues in
\cite{Baues}. Recently, the first and third authors observed that when $p$ is
an odd prime and $n\geq3$, examples of type $\left(  1,p\right)  $ appear as a
pair of tensor factors $E\otimes\Gamma$ in the mod $p$ homology of an
Eilenberg-Mac Lane space $K(\mathbb{Z},n)$ \cite{ABU1}. Examples of type
$\left(  m,1\right)  $ and the first \textquotedblleft
non-operadic\textquotedblright\ example of type $\left(  2,2\right)  $ were
constructed by the third author in \cite{Umble}; the first example of type
$\left(  2,3\right)  ,$ which appears here as Example \ref{ex1}, was
constructed by the second author in his undergraduate thesis \cite{Evans}. In
this paper we determine the structure relations for $A_{\infty}$-bialgebras of
type $\left(  m,n\right)  $ and construct a purely algebraic example for each
$\left(  m,n\right)  $ with $m\geq2$ and $m+n\geq4.$

The paper is organized as follows: In Section 2 we construct the components of
the particular biderivative we need. In Section 3 we review M. Markl's
\textquotedblleft fraction product\textquotedblright\ defined in \cite{Markl}
and use it to compute the $A_{\infty}$-bialgebra structure relations. Finally,
as a consequence of Theorem \ref{thm1} in Section 4, we obtain the following
examples:\textit{ Let }$R$ \textit{be a commutative ring with unity, let
}$m,n\in\mathbb{N}$\textit{ with }$m\geq2$\textit{ and} $m+n\geq4,$\textit{
and let }$\Lambda=E\left(  x\right)  $\textit{ be an exterior }$R$%
\textit{-algebra generated by }$x$ \textit{of degree }$2m-3.$\textit{ Let
}$H=E\left(  y\right)  $\textit{ be an exterior }$\Lambda$\textit{-algebra
generated by }$y$\textit{ of degree }$1,$ \textit{let} $\mu$\textit{ and
}$\Delta$\textit{ denote the trivial product and primitive coproduct on }$H,$
\textit{and define }$\omega_{m}^{n}\in Hom^{m+n-3}\left(  H^{\otimes
m},H^{\otimes n}\right)  $\textit{ by }$\omega_{m}^{n}\left(  y|\cdots
|y\right)  =xy|y|\cdots|y.$\textit{ Then }$\left(  H,\mu,\Delta,\omega_{m}%
^{n}\right)  $\textit{ is an }$A_{\infty}$\textit{-bialgebra of type }$\left(
m,n\right)  .$

\section{The Biderivative}

Given a graded $R$-module $H$ of finite type, consider the bigraded module
$M=\left\{  M_{m}^{n}=Hom\left(  H^{\otimes m},H^{\otimes n}\right)  \right\}
_{m,n\geq1}$ and a family of $R$-multilinear operations
\[
\left\{  \omega_{m}^{n}\in Hom^{m+n-3}\left(  H^{\otimes m},H^{\otimes
n}\right)  \right\}  _{n,m\geq1}.
\]
The sum $\omega=\sum\omega_{m}^{n}$ extends uniquely to its
\emph{biderivative} $d_{\omega}\in TTM,$ constructed by Saneblidze and the
second author in \cite{SU3}, and there is a (non-bilinear) $\circledcirc
$-product on $TM\subset TTM,$ which extends Gerstenhaber's $\circ_{i}%
$-products on $M_{\ast}^{1}\oplus M_{1}^{\ast} .$ The family
$\left\{  \omega_{m}^{n}\right\}  $ defines an $A_{\infty}$-bialgebra
structure on $H$ whenever $d_{\omega}\circledcirc d_{\omega}=0$, and the
structure relations are the homogeneous components of the equation $d_{\omega
}\circledcirc d_{\omega}=0$.

Consider a (biassociative) Hopf algebra $\left(  H,\Delta,\mu\right)  $
together with an operation $\omega_{m}^{n}\in Hom^{m+n-3}\left(  H^{\otimes
m},H^{\otimes n}\right)  $ with $m+n\geq4.$ Let us construct the component of
the biderivative $d_{\omega}$ in $TM$ when $\omega=\Delta-\mu+\omega_{m}^{n}.$
Let $\uparrow$ and $\downarrow$ denote the suspension and desuspension
operators, which shift dimension $+1$ and $-1,$ respectively. Let $\bar
{H}=H/H_{0}.$ For purposes of computing signs, we assume that $H$ is
simply-connected; once the signs have been determined, the simple-connectivity
assumption will be dropped. The \emph{bar construction of }$H$ is the tensor
coalgebra $BA=T\left(  \uparrow\bar{H}\right)  $ with differential induced by
$\mu;$ the \emph{cobar construction of }$H$ is the tensor algebra $\Omega
H=T\left(  \downarrow\bar{H}\right)  $ with differential induced by $\Delta$
(for details see \cite{Mac Lane}). Given a map $f:M\rightarrow N$ of graded
$\Lambda$-module modules, let $f_{i,j}$ denote the map $\mathbf{1}^{\otimes
i}\otimes f\otimes\mathbf{1}^{\otimes j}:N^{\otimes i}\otimes M\otimes
N^{\otimes j}\rightarrow N^{\otimes i+j+1}.$

Freely extend the map $\downarrow^{\otimes2}\Delta\uparrow:\downarrow\bar
{H}\rightarrow\Omega H$ as a derivation of $\Omega H,$ and dually, cofreely
extend the map $-\uparrow\mu\downarrow^{\otimes2}:\uparrow^{\otimes2}\bar
{H}^{\otimes2}\rightarrow BH$ as a coderivation of $BH.$ These extensions
contribute the components
\[
\delta^{k}=\sum_{i=1}^{k}\left(  -1\right)  ^{i+1}\Delta_{i-1,k-i}\text{ \ and
\ }-\partial_{k}=\sum_{i=1}^{k}\left(  -1\right)  ^{i+1}\mu_{i-1,k-i}%
\]
to the biderivative, where $k\geq1$ and the signs are the Koszul signs that
appear when factoring out suspension and desuspension operators.

Next, cofreely extend the map $\uparrow\Delta\downarrow:\uparrow\bar
{H}\rightarrow B\left(  H^{\otimes2}\right)  $ as a coalgebra map; this
extension
\[
\sum_{k\geq1}\left(  \uparrow\Delta\downarrow\right)  ^{\otimes k}%
:BH\rightarrow B\left(  H^{\otimes2}\right)
\]
contributes the components%
\[
\sum_{k\geq1}\left(  -1\right)  ^{\left\lfloor k/2\right\rfloor }%
\Delta^{\otimes k}.
\]
Dually, freely extend the map $-\downarrow\mu\uparrow:\downarrow\bar
{H}^{\otimes2}\rightarrow\Omega H$ as an algebra map; this extension%
\[
\sum_{k\geq1}\left(  -\downarrow\mu\uparrow\right)  ^{\otimes k}:\Omega\left(
H^{\otimes2}\right)  \rightarrow\Omega H
\]
contributes the components%
\[
\sum_{k\geq1}\left(  -1\right)  ^{\left\lfloor \left(  k+1\right)
/2\right\rfloor }\mu^{\otimes k}.
\]

Additional terms arise from various extensions involving $\omega_{m}^{n}$.
When $n=2,$ the components%
\[
\omega_{m}^{2}-\omega_{m}^{2}\otimes\Delta+\left(  -1\right)  ^{m}%
\Delta\otimes\omega_{m}^{2}-\omega_{m}^{2}\otimes\omega_{m}^{2}+\cdots.
\]
arise from the cofree extension of $\uparrow\Delta\downarrow+\uparrow
\omega_{m}^{2}\downarrow^{\otimes m}:\uparrow\bar{H}\oplus\uparrow^{\otimes
m}\bar{H}^{\otimes m}\rightarrow B\left(  H^{\otimes2}\right)  $ as a
coalgebra map $BH\rightarrow B\left(  H^{\otimes2}\right)  ;$ and when $m=2,$
the components%
\[
\omega_{2}^{n}+\mu\otimes\omega_{2}^{n}-\left(  -1\right)  ^{n}\omega_{2}%
^{n}\otimes\mu-\omega_{2}^{n}\otimes\omega_{2}^{n}+\cdots
\]
arise from the free extension of $\downarrow^{\otimes n}\omega_{2}^{n}%
\uparrow-\downarrow\mu\uparrow:\downarrow\bar{H}^{\otimes2}\rightarrow\Omega
H$ as an algebra map $\Omega\left(  H^{\otimes2}\right)  \rightarrow\Omega H$.
In particular, when $m=n=2$, the component of $d_{\omega}$ in $TM$ is
\begin{align*}
d_{\omega}  &  =\delta^{\ast}-\partial_{\ast}+\omega_{2}^{2}-\omega_{2}%
^{2}\otimes\omega_{2}^{2}+\cdots+\sum_{k\geq2}\left[  \left(  -1\right)
^{\left\lfloor \left(  k+1\right)  /2\right\rfloor }\mu^{\otimes k}+\left(
-1\right)  ^{\left\lfloor k/2\right\rfloor }\Delta^{\otimes k}\right] \\
&  +\mu\otimes\omega_{2}^{2}-\omega_{2}^{2}\otimes\mu+\cdots-\omega_{2}%
^{2}\otimes\Delta+\Delta\otimes\omega_{2}^{2}+\cdots.
\end{align*}

When $m\neq2$ or $n\neq2,$ additional terms arise from extensions governed by
the following generalization of an $\left(  f,g\right)  $-(co)derivation:

\begin{definition}
Let $\left(  A,\mu_{A}\right)  $ and $\left(  B,\mu_{B}\right)  $ be graded
algebras and let $f,g:A\rightarrow B$ be algebra maps. A map $h:A\rightarrow
B$ of degree $p$ is an $\left(  f,g\right)  $\emph{-derivation of degree} $p$
if $h\mu_{A}=\mu_{B}\left(  f\otimes h+h\otimes g\right)  .$ There is the
completely dual notion of an $\left(  f,g\right)  $\emph{-coderivation of
degree} $p.$
\end{definition}

\noindent Thus a $\left(  \mathbf{1,1}\right)  $-(co)derivation of degree $1$
is a classical (co)derivation and an $\left(  f,g\right)  $-(co)derivation of
degree $1$ is a classical $\left(  f,g\right)  $-(co)derivation. Define
$f^{1}=g_{1}=\mathbf{1,}$ and for $m,n\geq2$ define%
\begin{align*}
f^{n}  &  =\left(  \Delta\otimes\mathbf{1}^{\otimes n-2}\right)  \cdots\left(
\Delta\otimes\mathbf{1}\right)  \Delta=\left(  \mathbf{1}^{\otimes n-2}%
\otimes\Delta\right)  \cdots\left(  \mathbf{1}\otimes\Delta\right)
\Delta\text{ and}\\
g_{m}  &  =\mu\left(  \mu\otimes\mathbf{1}\right)  \cdots\left(  \mu
\otimes\mathbf{1}^{\otimes m-2}\right)  =\mu\left(  \mathbf{1}\otimes
\mu\right)  \cdots\left(  \mathbf{1}^{\otimes m-2}\otimes\mu\right)  .
\end{align*}

If $n>2,$ cofreely extend $\left(  -1\right)  ^{\left\lfloor n/2\right\rfloor
+1}\uparrow f^{n}\downarrow:\uparrow\bar{H}\rightarrow B\left(  H^{\otimes
n}\right)  $ as a coalgebra map $BH\rightarrow B\left(  H^{\otimes n}\right)
$ and obtain%
\[
F^{n}=\sum_{k\geq1}\left(  -1\right)  ^{\left(  \left\lfloor n/2\right\rfloor
+1\right)  k}\left(  \uparrow f^{n}\downarrow\right)  ^{\otimes k};
\]
then cofreely extend $\uparrow\omega_{m}^{n}\downarrow^{\otimes m}%
:\uparrow^{\otimes m}\bar{H}^{\otimes m}\rightarrow B\left(  H^{\otimes
n}\right)  $ as an $\left(  F^{n},F^{n}\right)  $-coderivation $BH\rightarrow
B\left(  H^{\otimes n}\right)  $ of degree $n-2.$ This extension contributes
the components%
\[
\omega_{m}^{n}+\left(  -1\right)  ^{\left\lfloor \left(  n+1\right)
/2\right\rfloor }\left(  \omega_{m}^{n}\otimes f^{n}-\left(  -1\right)
^{m}f^{n}\otimes\omega_{m}^{n}\right)  +\cdots
\]
to the biderivative, where we have applied the identity $\left\lfloor
n/2\right\rfloor +n\equiv\left\lfloor \left(  n+1\right)  /2\right\rfloor $
$\left(  \operatorname{mod}2\right)  .$ In particular, when $m=2$ we have%
\begin{align*}
d_{\omega}  &  =\delta^{\ast}-\partial_{\ast}+\omega_{2}^{n}-\omega_{2}%
^{n}\otimes\omega_{2}^{n}+\cdots+\sum_{k\geq2}\left[  \left(  -1\right)
^{\left\lfloor \left(  k+1\right)  /2\right\rfloor }\mu^{\otimes k}+\left(
-1\right)  ^{\left\lfloor k/2\right\rfloor }\Delta^{\otimes k}\right] \\
&  +\left(  -1\right)  ^{\left\lfloor \left(  n+1\right)  /2\right\rfloor
}\left(  \omega_{2}^{n}\otimes f^{n}-f^{n}\otimes\omega_{2}^{n}\right)
+\cdots+\mu\otimes\omega_{2}^{n}-\left(  -1\right)  ^{n}\omega_{2}^{n}%
\otimes\mu+\cdots.
\end{align*}

Dually, if $m>2,$ freely extend $\left(  -1\right)  ^{\left\lfloor \left(
m+1\right)  /2\right\rfloor }\downarrow g_{m}\uparrow:\downarrow\bar
{H}^{\otimes m}\rightarrow\Omega H$ as an algebra map $\Omega\left(
H^{\otimes m}\right)  \rightarrow\Omega H$ and obtain%
\[
G_{m}=\sum_{k\geq1}\left(  -1\right)  ^{\left\lfloor \left(  m+1\right)
/2\right\rfloor k}\left(  \downarrow g_{m}\uparrow\right)  ^{\otimes k};
\]
then freely extend the map $\downarrow^{\otimes n}\omega_{m}^{n}%
\uparrow:\downarrow\bar{H}^{\otimes m}\rightarrow\Omega H$ as a $\left(
G_{m},G_{m}\right)  $-derivation $\Omega\left(  H^{\otimes m}\right)
\rightarrow\Omega H$ of degree $m-2.$ This extension contributes the
components
\[
\omega_{m}^{n}-\left(  -1\right)  ^{\left\lfloor m/2\right\rfloor }\left(
g_{m}\otimes\omega_{m}^{n}-\left(  -1\right)  ^{n}\omega_{m}^{n}\otimes
g_{m}\right)  +\cdots.
\]
In particular, when $n=2$ we have%
\begin{align*}
d_{\omega}  &  =\delta^{\ast}-\partial_{\ast}+\omega_{m}^{2}-\omega_{m}%
^{2}\otimes\omega_{m}^{2}+\cdots+\sum_{k\geq2}\left[  \left(  -1\right)
^{\left\lfloor \left(  k+1\right)  /2\right\rfloor }\mu^{\otimes k}+\left(
-1\right)  ^{\left\lfloor k/2\right\rfloor }\Delta^{\otimes k}\right] \\
&  -\omega_{m}^{2}\otimes\Delta+\left(  -1\right)  ^{m}\Delta\otimes\omega
_{m}^{2}+\cdots-\left(  -1\right)  ^{\left\lfloor m/2\right\rfloor }\left(
g_{m}\otimes\omega_{m}^{2}-\omega_{m}^{2}\otimes g_{m}\right)  +\cdots.
\end{align*}
And finally, if $m,n\neq2$ we have%
\begin{align*}
d_{\omega}  &  =\delta^{\ast}-\partial_{\ast}+\omega_{m}^{n}+\sum_{k\geq
2}\left[  \left(  -1\right)  ^{\left\lfloor \left(  k+1\right)
/2\right\rfloor }\mu^{\otimes k}+\left(  -1\right)  ^{\left\lfloor
k/2\right\rfloor }\Delta^{\otimes k}\right] \\
&  +\left(  -1\right)  ^{\left\lfloor \left(  n+1\right)  /2\right\rfloor
}\left(  \omega_{m}^{n}\otimes f^{n}-\left(  -1\right)  ^{m}f^{n}\otimes
\omega_{m}^{n}\right)  +\cdots\\
&  -\left(  -1\right)  ^{\left\lfloor m/2\right\rfloor }\left(  g_{m}%
\otimes\omega_{m}^{n}-\left(  -1\right)  ^{n}\omega_{m}^{n}\otimes
g_{m}\right)  +\cdots.
\end{align*}
This completes the construction of the component of $d_{\omega}$ in $TM.$

To determine the $A_{\infty}$-bialgebra structure relations, we must define
the $\circledcirc$-product and extract the homogeneous components of the
equation $d_{\omega}\circledcirc d_{\omega}=0.$

\section{The $\circledcirc$-Product}

As mentioned above, a family of operations $\omega=\left\{  \omega_{m}%
^{n}\right\}  $ defines an $A_{\infty}$-bialgebra structure on $H$ whenever
$d_{\omega}\circledcirc d_{\omega}=0$, and the structure relations appear as
homogeneous components of the equation $d_{\omega}\circledcirc d_{\omega}=0$.
The $\circledcirc$-product is a restriction of the \emph{fraction product
}$\diagup:TM\times TM\rightarrow TM$\ defined by M. Markl in \cite{Markl} and
reviewed below. For $\alpha,\beta\in TM$ define%
\[
\alpha\circledcirc\beta=\left\{
\begin{array}
[c]{cl}%
\alpha\diagup\beta, & \text{\textit{if} }\alpha\text{ \textit{and} }%
\beta\text{ \textit{are components of} }d_{\omega}\\
0, & \text{\textit{otherwise.}}%
\end{array}
\right.
\]

Consider the \emph{submodule\ }$S$ \emph{of special elements} in the free
$\operatorname*{PROP}$ $M$ whose additive generators are either
indecomposables $\theta_{m}^{n}\in M_{m}^{n}$ of dimension $m+n-3$ or
monomials $\alpha_{\mathbf{x}}^{\mathbf{y}}\in TM\times TM$ expressed as
\textquotedblleft elementary fractions\textquotedblright\ of the form%
\begin{equation}
\alpha_{\mathbf{x}}^{\mathbf{y}}=\frac{\alpha_{p}^{y_{1}}\cdots\alpha
_{p}^{y_{q}}}{\alpha_{x_{1}}^{q}\cdots\alpha_{x_{p}}^{q}}, \label{fraction}%
\end{equation}
where $\alpha_{x_{i}}^{q}$ and $\alpha_{p}^{y_{j}}$ are additive generators of
$S$ and the $j^{th}$ output of $\alpha_{x_{i}}^{q}$ is linked to the $i^{th}$
input of $\alpha_{p}^{y_{j}}.$ All elementary fractions define the fraction product.

In terms of a composition, let $\sigma_{q,p}:\left(  H^{\otimes q}\right)
^{\otimes p}\overset{\approx}{\rightarrow}\left(  H^{\otimes p}\right)
^{\otimes q}$ be the standard permutation of tensor factors and note that
$\alpha_{p}^{y_{1}}\cdots\alpha_{p}^{y_{q}}\in Hom\left(  \left(  H^{\otimes
p}\right)  ^{\otimes q},H^{\otimes\Sigma y_{i}}\right)  $ and $\alpha_{x_{1}%
}^{q}\cdots\alpha_{x_{p}}^{q}\in Hom\left(  H^{\otimes\Sigma x_{j}},\left(
H^{\otimes q}\right)  ^{\otimes p}\right)  ;$ then $\alpha_{\mathbf{x}%
}^{\mathbf{y}}=\alpha_{p}^{y_{1}}\cdots\alpha_{p}^{y_{q}}$ $\circ$
$\sigma_{q,p}$ $\circ$ $\alpha_{x_{1}}^{q}\cdots\alpha_{x_{p}}^{q}\in
Hom\left(  H^{\otimes\Sigma x_{j}},H^{\otimes\Sigma y_{i}}\right)  .$ Whereas
juxtaposition in the \textquotedblleft numerator\textquotedblright\ and
\textquotedblleft denominator\textquotedblright\ denotes tensor product in
$TM$, $\dim\alpha_{\mathbf{x}}^{\mathbf{y}}=\sum_{i,j}\dim\alpha_{x_{i}}%
^{q}+\dim\alpha_{p}^{y_{j}}$.

We picture indecomposables and fraction products two ways. First, an
indecomposable generator $\theta_{m}^{n}$ is pictured as a double corolla with
$m$ inputs and $n$ outputs as in Figure 1, and a general elementary fraction
$\alpha_{\mathbf{x}}^{\mathbf{y}}$ is pictured as a connected non-planar graph
as in Figure 2. \
\[
\hspace*{-0.25in}%
\begin{array}
[c]{c}%
\theta_{m}^{n}=
\end{array}%
\begin{array}
[c]{c}%
n\\%
{\includegraphics[
height=0.4445in,
width=0.4514in
]%
{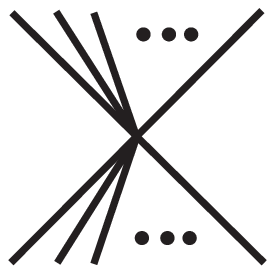}%
}
\\
m
\end{array}
\]

\begin{center}
Figure 1. An indecomposable pictured as a double corolla.\bigskip%

\[
\alpha_{111}^{12}\text{ \ }=\text{ \ }\frac{\alpha_{3}^{1}\alpha_{12}^{11}%
}{\alpha_{1}^{2}\alpha_{1}^{2}\alpha_{1}^{2}}\text{ \ }=\text{ \
\raisebox{-0.2517in}{\includegraphics[
trim=0.000000in 0.126900in 0.000000in 0.000000in,
height=0.7092in,
width=0.7904in
]%
{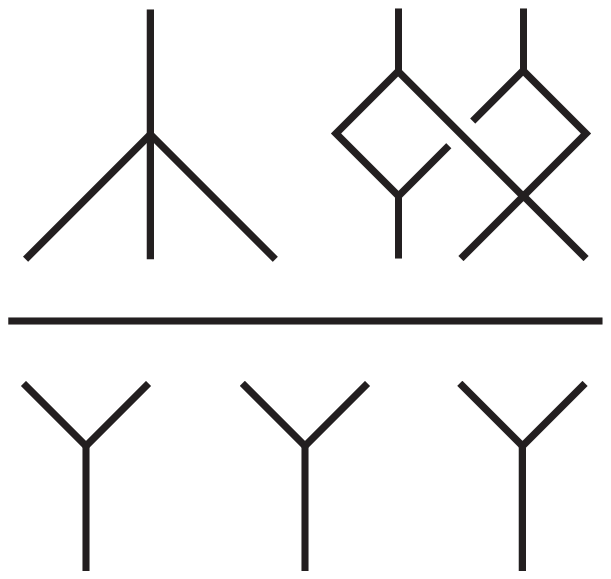}%
}
\ }=\text{ \
\raisebox{-0.2863in}{\includegraphics[
height=0.7524in,
width=0.6054in
]%
{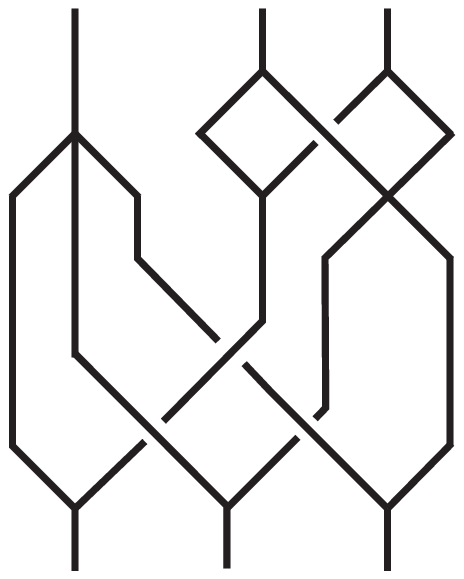}%
}
}%
\]

Figure 2. A fraction product pictured as a non-planar graph.\bigskip
\end{center}

Second, identify the pair of isomorphic modules $\left(  H^{\otimes q}\right)
^{\otimes p}\approx\left(  H^{\otimes p}\right)  ^{\otimes q}$ with the point
$\left(  p,q\right)  \in\mathbb{N}^{2}$ and think of indecomposables and
fraction products as operators on $\mathbb{N}^{2}$. An indecomposible
$\omega_{m}^{n}$ is pictured as a \textquotedblleft
transgressive\ arrow\textquotedblright\ $\left(  m,1\right)  \longrightarrow
\left(  1,n\right)  $ and a fraction product $\left(  \alpha_{p}^{y_{1}}%
\cdots\alpha_{p}^{y_{q}}\right)  \diagup\left(  \alpha_{x_{1}}^{q}\cdots
\alpha_{x_{p}}^{q}\right)  $ as a \textquotedblleft transgressive
path\textquotedblright\ $\left(  \Sigma x_{i},1\right)  \longrightarrow\left(
q,p\right)  \longrightarrow\left(  1,\Sigma y_{j}\right)  $ (see in Figure 3).
While this representation is helpful conceptually, it is not faithful. For
example, both $\alpha_{21}^{1}=\omega_{2}^{1}\diagup\left(  \omega_{2}%
^{1}\otimes\mathbf{1}\right)  $ and $\alpha_{12}^{1}=\omega_{2}^{1}%
\diagup\left(  \mathbf{1}\otimes\omega_{2}^{1}\right)  $ are represented by
the path $\left(  3,1\right)  \rightarrow\left(  2,1\right)  \rightarrow
\left(  1,1\right)  $.%

\begin{center}
\includegraphics[
trim=0.998137in 0.000000in 0.000000in 0.000000in,
height=1.9476in,
width=6.1376in
]%
{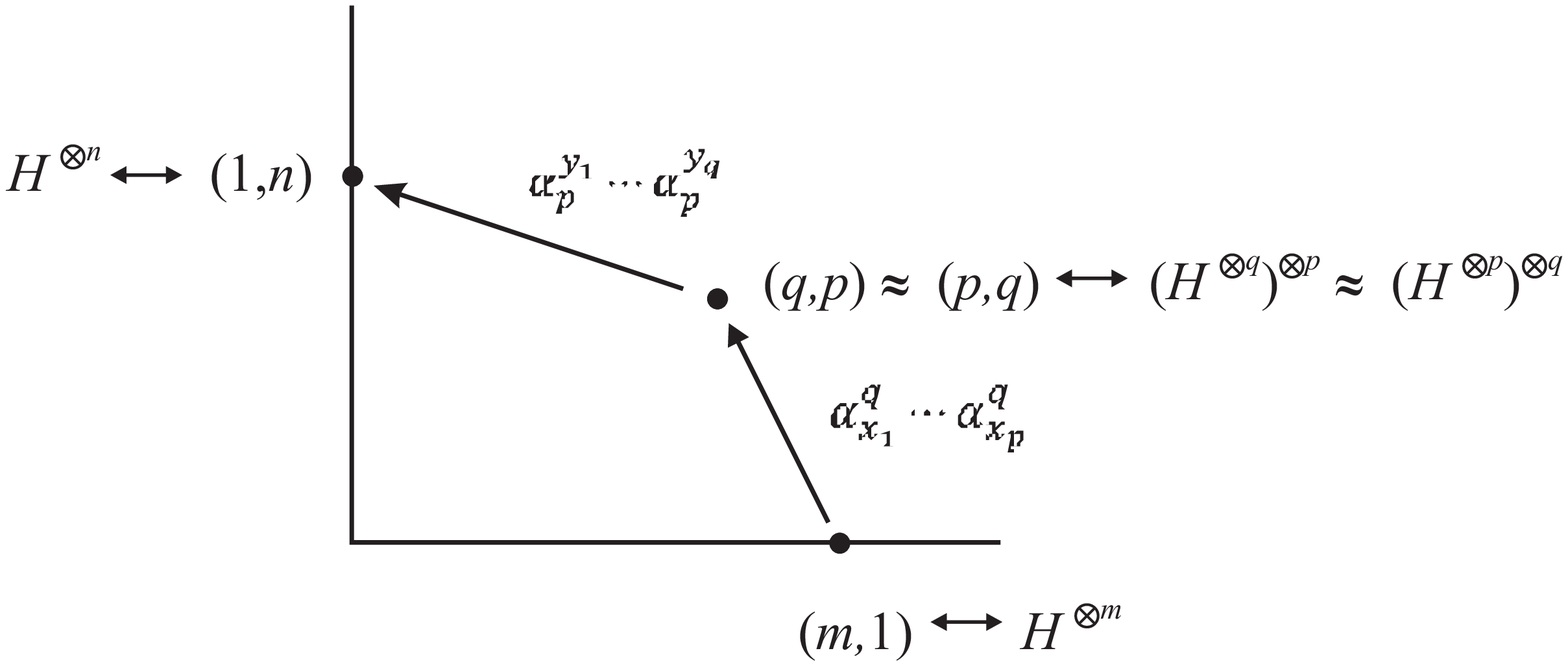}%
\\
{}%
\end{center}

\begin{center}
Figure 3. A fraction product as a composition of operators on $\mathbb{N}%
^{2}.\bigskip$
\end{center}

Up to sign, the homogeneous component of $d_{\omega}\circledcirc d_{\omega}$
with $m$ inputs and $n$ outputs consists of $\omega_{m}^{n}$ plus the sum of
all fractions $\alpha\circledcirc\beta$ thought of as paths $\left(
m,1\right)  \overset{\beta}{\rightarrow}\left(  q,p\right)  \overset{\alpha
}{\rightarrow}\left(  1,n\right)  .$

\section{$A_{\infty}$-Bialgebras of Type $\left(  m,n\right)  $}

Everything we need to determine the desired $A_{\infty}$-bialgebra structure
relations is now in place. In addition to the (co)associativity relations
$\mu\left(  \mu\otimes\mathbf{1}\right)  =\mu\left(  \mathbf{1}\otimes
\mu\right)  $ and $\left(  \Delta\otimes\mathbf{1}\right)  \Delta=\left(
\mathbf{1}\otimes\Delta\right)  \Delta,$ and the classical Hopf relation
$\Delta\mu=\left(  \mu\otimes\mu\right)  \sigma_{2,2}\left(  \Delta
\otimes\Delta\right)  $, the homogeneous components of $d_{\omega}\circledcirc
d_{\omega}=0$ with $m+1$ inputs and $n$ outputs produce Structure Relation 1:%
\[
\delta^{n}\omega_{m}^{n}=\left(  g_{m}\otimes\omega_{m}^{n}-\left(  -1\right)
^{n}\omega_{m}^{n}\otimes g_{m}\right)  \sigma_{2,m}\Delta^{\otimes m}.
\]%
\begin{center}
\includegraphics[
trim=0.000000in -0.125584in 0.000000in 0.000000in,
height=2.8435in,
width=3.3814in
]%
{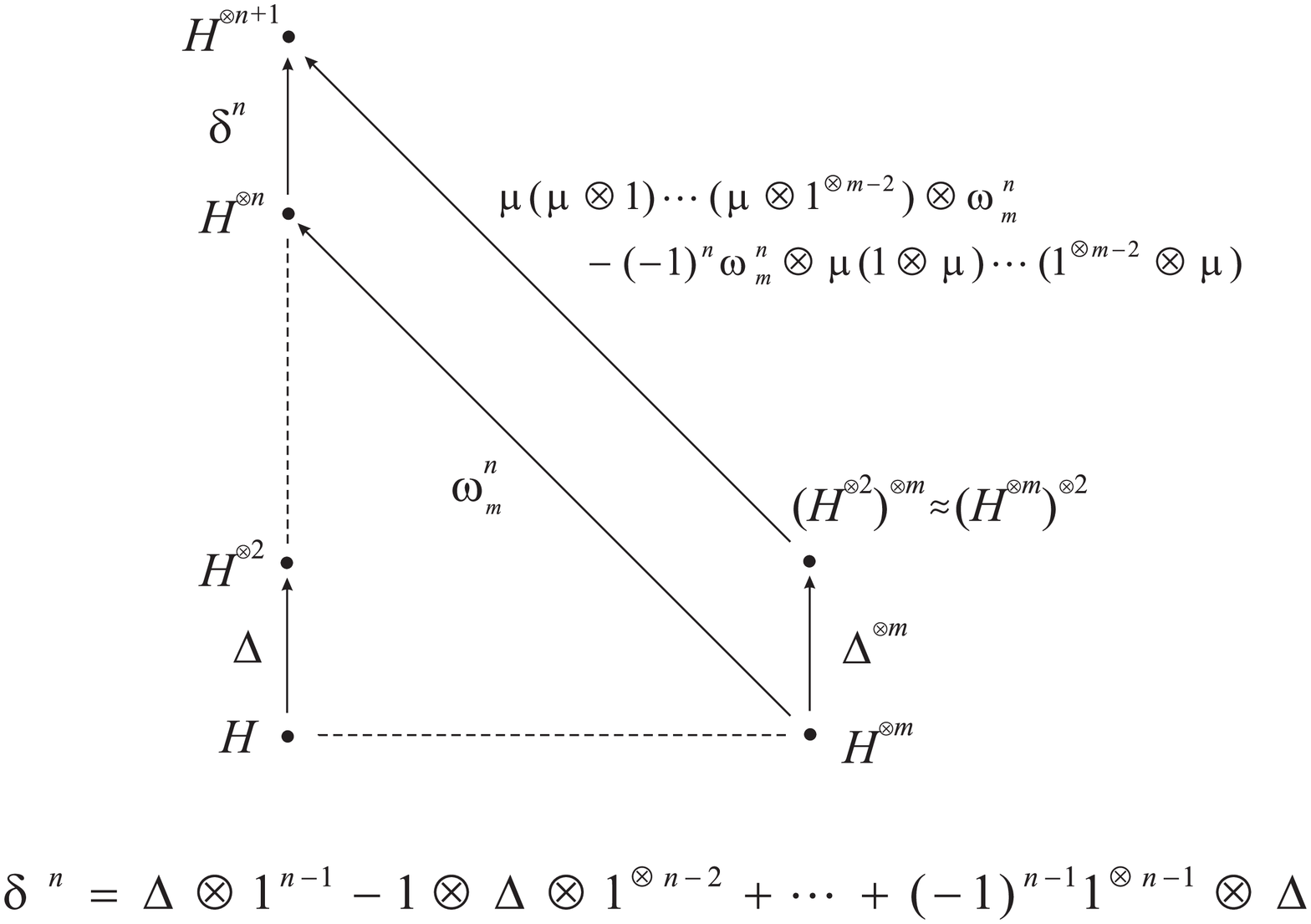}%
\\
Figure 4. Structure relation 1.
\end{center}
Similarly, the homogeneous components of $d_{\omega}\circledcirc d_{\omega}=0$
with $m$ inputs and $n+1$ outputs produce Structure Relation 2:%
\[
\omega_{m}^{n}\partial_{m}=\left(  -1\right)  ^{\left\lfloor \left(
n+1\right)  /2\right\rfloor }\mu^{\otimes n}\sigma_{n,2}\left(  \omega_{m}%
^{n}\otimes f^{n}-\left(  -1\right)  ^{m}f^{n}\otimes\omega_{m}^{n}\right)  .
\]%
\begin{center}
\includegraphics[
trim=0.000000in -0.124936in 0.000000in 0.000000in,
height=2.6161in,
width=4.1122in
]%
{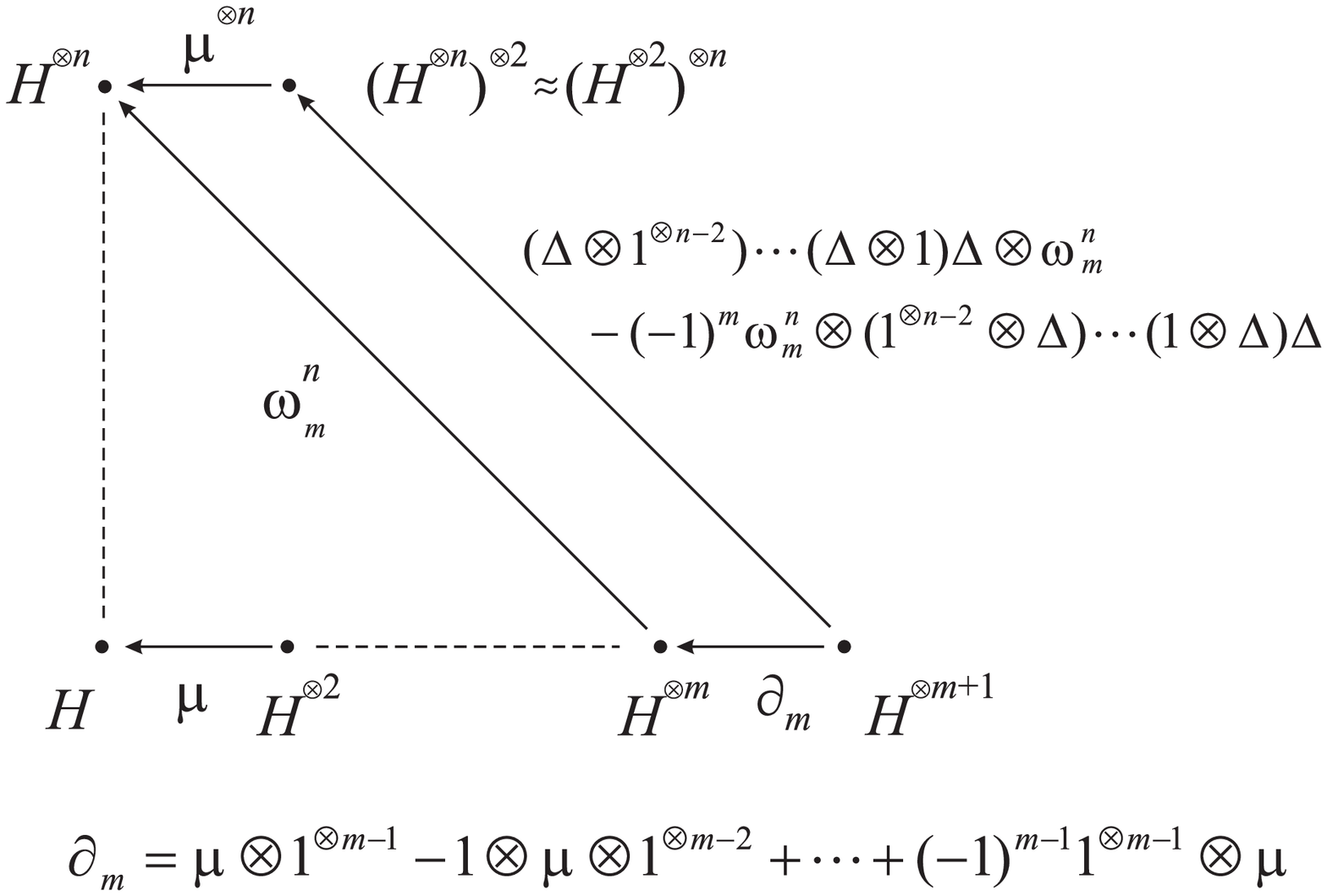}%
\\
Figure 5. Structure relation 2.
\end{center}
\vspace*{0.2in}

Additional relations appear in Definition \ref{defn1} below. Whereas our
examples do not depend on the signs in Relations 3, 4(a-c), and 5(a-c), we
omit the complicated sign formulas. Note that Relation 3 is a classical
$A_{\infty}$-(co)algebra relation when $n=1$ (or $m=1$)$.$

\begin{definition}
\label{defn1}Given $m,n\in\mathbb{N}$ with $m+n\geq4,$ an $A_{\infty}%
$\emph{-bialgebra of type} $\left(  m,n\right)  $ is a $\Lambda$-Hopf algebra
$\left(  H,\Delta,\mu\right)  $ together with an operation $\omega_{m}^{n}\in
Hom^{m+n-3}\left(  H^{\otimes m},H^{\otimes n}\right)  $ such that

\begin{enumerate}
\item[1.] $\delta^{n}\omega_{m}^{n}=\left(  g_{m}\otimes\omega_{m}^{n}-\left(
-1\right)  ^{n}\omega_{m}^{n}\otimes g_{m}\right)  \sigma_{2,m}\Delta^{\otimes
m}$

\item[2.] $\omega_{m}^{n}\partial_{m}=\mu^{\otimes n}\sigma_{n,2}\left(
f^{n}\otimes\omega_{m}^{n}-\left(  -1\right)  ^{m}\omega_{m}^{n}\otimes
f^{n}\right)  $

\item[3.] $\left(  \omega_{m}^{n}\otimes g_{m}^{\otimes n-1}\pm g_{m}%
\otimes\omega_{m}^{n}\otimes g_{m}^{\otimes n-2}\pm\cdots\pm g_{m}^{\otimes
n-1}\otimes\omega_{m}^{n}\right)  \sigma_{n,m}\smallskip\newline%
\hspace*{0.2in}\left.  \left(  \omega_{m}^{n}\otimes\left(  f^{n}\right)
^{\otimes m-1}\pm f^{n}\otimes\omega_{m}^{n}\otimes\left(  f^{n}\right)
^{\otimes m-2}\pm\cdots\pm\left(  f^{n}\right)  ^{\otimes m-1}\otimes
\omega_{m}^{n}\right)  =0\right.  $

\item[4.] If $m=2,$ then

\begin{enumerate}
\item[a.] $\left(  \omega_{2}^{n}\otimes\omega_{2}^{n}\right)  \sigma
_{2,2}\left(  \Delta\otimes\Delta\right)  =0$

\item[b.] $\left(  \omega_{2}^{n}\right)  ^{\otimes n}\sigma_{n,2}\left(
f^{n}\otimes\omega_{2}^{n}-\omega_{2}^{n}\otimes f^{n}\right)  =0$

\item[c.] $\left[  \mu\otimes\left(  \omega_{2}^{n}\right)  ^{\otimes n-1}%
\pm\omega_{2}^{n}\otimes\mu\otimes\left(  \omega_{2}^{n}\right)  ^{\otimes
n-2}\right.  \newline\hspace*{0.3in}\left.  \pm\cdots\pm\left(  \omega_{2}%
^{n}\right)  ^{\otimes n-1}\otimes\mu\right]  \sigma_{n,2}\left(  f^{n}%
\otimes\omega_{2}^{n}-\omega_{2}^{n}\otimes f^{n}\right)  =0\newline%
\hspace*{1in}\vdots$

\item[d.] $\left[  \mu^{\otimes n-1}\otimes\omega_{2}^{n}+\mu^{\otimes
n-3}\otimes\omega_{2}^{n}\otimes\mu^{\otimes2}+\cdots\right.  \newline%
\hspace*{0.3in}\left.  -\left(  -1\right)  ^{n}\left(  \mu^{\otimes
n-2}\otimes\omega_{2}^{n}\otimes\mu+\mu^{\otimes n-4}\otimes\omega_{2}%
^{n}\otimes\mu^{\otimes3}+\cdots\right)  \right]  \newline\hspace
*{0.6in}\sigma_{n,2}\left(  f^{n}\otimes\omega_{2}^{n}-\omega_{2}^{n}\otimes
f^{n}\right)  =0$
\end{enumerate}

\item[5.] If $n=2,$ then

\begin{enumerate}
\item[a.] $\left(  \mu\otimes\mu\right)  \sigma_{2,2}\left(  \omega_{m}%
^{2}\otimes\omega_{m}^{2}\right)  =0$

\item[b.] $\left(  g_{m}\otimes\omega_{m}^{2}-\omega_{m}^{2}\otimes
g_{m}\right)  \sigma_{2,m}\left(  \omega_{m}^{2}\right)  ^{\otimes m}=0$

\item[c.] $\left(  g_{m}\otimes\omega_{m}^{2}-\omega_{m}^{2}\otimes
g_{m}\right)  \sigma_{2,m}\left[  \Delta\otimes\left(  \omega_{m}^{2}\right)
^{\otimes m-1}\pm\omega_{m}^{2}\otimes\Delta\otimes\left(  \omega_{m}%
^{2}\right)  ^{\otimes m-2}\right.  \newline\hspace*{0.3in}\left.  \pm
\cdots\pm\left(  \omega_{m}^{2}\right)  ^{\otimes m-1}\otimes\Delta\right]
=0\newline\hspace*{1in}\vdots$

\item[d.] $\left(  g_{m}\otimes\omega_{m}^{2}-\omega_{m}^{2}\otimes
g_{m}\right)  \sigma_{2,m}\left[  \Delta^{\otimes m-1}\otimes\omega_{m}%
^{2}+\Delta^{\otimes m-3}\otimes\omega_{m}^{2}\otimes\Delta^{\otimes2}%
+\cdots\right.  \newline\hspace*{0.3in}-\left(  -1\right)  ^{m}\left.
\Delta^{\otimes m-2}\otimes\omega_{m}^{2}\otimes\Delta+\Delta^{\otimes
m-4}\otimes\omega_{m}^{2}\otimes\Delta^{\otimes3}+\cdots\right]  =0$
\end{enumerate}

\item[6.] If $m=n=2,$ then $\left(  \omega_{2}^{2}\otimes\omega_{2}%
^{2}\right)  \sigma_{2,2}\left(  \omega_{2}^{2}\otimes\omega_{2}^{2}\right)
=0.\vspace*{0.1in}$
\end{enumerate}
\end{definition}

\begin{example}
\label{graphical}When $\left(  m,n\right)  =\left(  2,3\right)  ,$ the
relations in Definition \ref{defn1} are represented graphically as
follows:$\smallskip$
\end{example}

\noindent Relation 1: $\left(  \Delta\otimes\mathbf{1}^{\otimes2}%
-\mathbf{1}\otimes\Delta\otimes\mathbf{1}+\mathbf{1}^{\otimes2}\otimes
\Delta\right)  \omega_{2}^{3}=\left(  \mu\otimes\omega_{2}^{3}+\omega_{2}%
^{3}\otimes\mu\right)  \sigma_{2,2}\left(  \Delta\otimes\Delta\right)
\medskip$

\begin{center}%
\begin{center}
\includegraphics[
height=0.4142in,
width=3.1505in
]%
{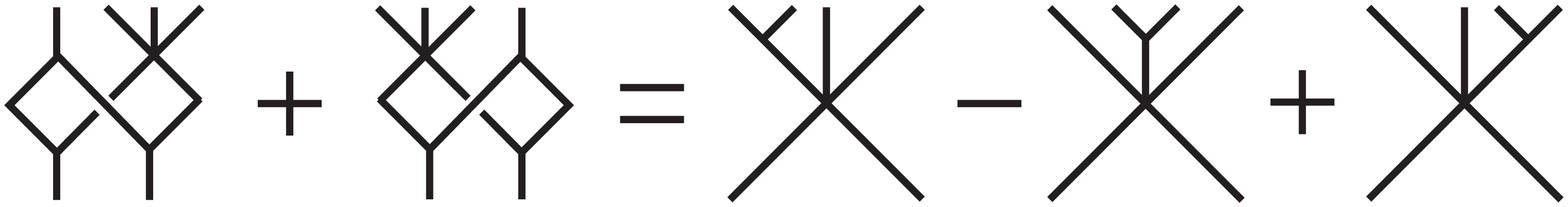}%
\end{center}
$\smallskip$
\end{center}

\noindent Relation 2:\ $\omega_{2}^{3}\left(  \mu\otimes\mathbf{1}%
-\mathbf{1}\otimes\mu\right)  =\mu^{\otimes3}\sigma_{3,2}\left(  \left(
\Delta\otimes\mathbf{1}\right)  \Delta\otimes\omega_{2}^{3}-\omega_{2}%
^{3}\otimes\left(  \mathbf{1}\otimes\Delta\right)  \Delta\right)  .\medskip$

\begin{center}%
\begin{center}
\includegraphics[
height=0.4073in,
width=2.3177in
]%
{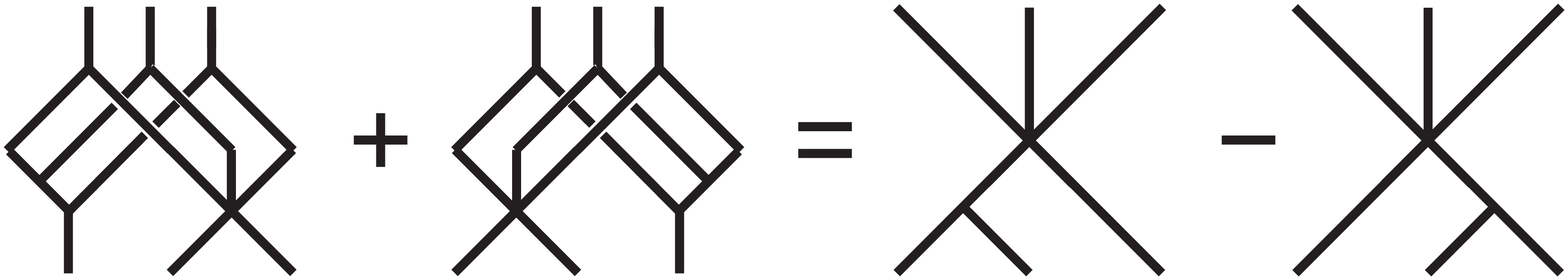}%
\end{center}

\end{center}

\noindent Relation 3 (4d):$\smallskip$

\noindent$\left(  \omega_{2}^{3}\otimes\mu^{\otimes2}+\mu\otimes\omega_{2}%
^{3}\otimes\mu+\mu^{\otimes2}\otimes\omega_{2}^{3}\right)  \sigma_{3,2}\left(
\left(  \Delta\otimes\mathbf{1}\right)  \Delta\otimes\omega_{2}^{3}-\omega
_{2}^{3}\otimes\left(  \mathbf{1}\otimes\Delta\right)  \Delta\right)
=0.\medskip$

\begin{center}%
\begin{center}
\includegraphics[
height=0.48in,
width=4.7556in
]%
{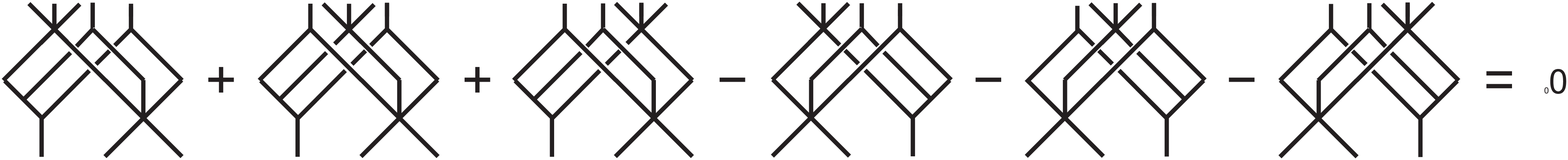}%
\end{center}
$\smallskip$
\end{center}

\begin{example}
\label{ex1}Let $H=\left\langle y\right\rangle $ be a graded $\mathbb{Z}_{2}%
$-module with $|y|=-2$. Let $\mu$ and $\Delta$ be the trivial product and
primitive coproduct on $H$, i.e., $\mu\left(  1|y\right)  =\mu\left(
y|1\right)  =y$ and $\Delta\left(  y\right)  =1|y+y|1$. Then $H$ is a Hopf
algebra. Define an operation $\omega_{2}^{3}:H^{\otimes2}\rightarrow
H^{\otimes3}$ by $\omega_{2}^{3}\left(  y|y\right)  =y|1|1+1|1|y$ and zero
otherwise. Then $\omega_{2}^{3}$ satisfies the structure relations in
Definition \ref{defn1} and $\left(  H,\Delta,\mu,\omega_{2}^{3}\right)  $ is
an $A_{\infty}$-bialgebra of type $\left(  2,3\right)  .$ We verify Relation
1; verification of the other relations is left to the reader.%
\[%
\begin{tabular}
[c]{l}%
$(\mu\otimes\omega_{2}^{3}+\omega_{2}^{3}\otimes\mu)\sigma_{2,2}(\Delta
\otimes\Delta)(y|y)$\\
$\hspace*{0.25in}=(\mu\otimes\omega_{2}^{3}+\omega_{2}^{3}\otimes\mu)\left(
y|y|1|1+y|1|1|y+1|y|y|1+1|1|y|y\right)  $\\
$\hspace*{0.25in}=(y|1|1+1|1|y)\otimes1+1\otimes(y|1|1+1|1|y)$\\
$\hspace*{0.25in}=y|1|1|1+1|1|y|1+1|y|1|1+1|1|1|y$\\
$\hspace*{0.25in}=(\Delta\otimes\mathbf{1}^{\otimes2}+\mathbf{1}\otimes
\Delta\otimes\mathbf{1}+\mathbf{1}^{\otimes2}\otimes\Delta)\omega_{2}%
^{3}(y|y)=\delta^{3}\omega_{2}^{3}(y|y)$.
\end{tabular}
\ \ \ \ \ \ \
\]

\end{example}

Before we present our general family of examples, we determine all possible
dimensions for our generators. Let $R$ be a commutative ring with unity.

\begin{proposition}
\label{prop1}Let $p,q\in\mathbb{N}$, let $\Lambda=E\left(  x\right)  $ be an
exterior $R$-algebra generated by $x$ of degree $p,$ and let $H=\left\langle
y\right\rangle $ be a graded $\Lambda$-module generated by $y$ of degree $q$.
The operation $\omega_{m}^{n}:H^{\otimes m}\rightarrow H^{\otimes n}$ defined
by
\[
\omega_{m}^{n}\left(  y|\cdots|y\right)  =xy|y|\cdots|y
\]
has degree $m+n-3$ if

\begin{enumerate}
\item[\textit{(i)}] $m=n=2,$ $p=1,$ and $q\geq2;$

\item[\textit{(ii)}] $m\geq2,$ $n\geq1,$ $p=2m-3,$ and $q=1;$

\item[\textit{(iii)}] $m\geq3,$ $m+1\leq n\leq3m-p-3,$ $1\leq p\leq2m-4,$ and
$q=\left(  p-m-n+3\right)  /\left(  m-n\right)  .$
\end{enumerate}
\end{proposition}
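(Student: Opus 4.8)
The plan is to reduce the claim to a single linear relation among the integers $m,n,p,q$ and then verify that relation by substitution in each of the three cases. First I would compute the degree of $\omega_m^n$ directly from its definition. Since $\Lambda=E(x)$ has $|x|=p$ and $H=\langle y\rangle$ is generated over $\Lambda$ by $y$ with $|y|=q$, the module $H$ has $R$-basis $\{y,xy\}$ with $|xy|=p+q$. The element $y|\cdots|y\in H^{\otimes m}$ is homogeneous of degree $mq$, while its image $xy|y|\cdots|y\in H^{\otimes n}$ has degree $(p+q)+(n-1)q=p+nq$. Hence $\omega_m^n$ is homogeneous of degree
\[
(p+nq)-mq=p+(n-m)q,
\]
so the assertion is equivalent to the \emph{master relation}
\begin{equation}
p+(n-m)q=m+n-3.\tag{$\star$}
\end{equation}

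I would then verify $(\star)$ case by case. In case (i), $n-m=0$, so the $q$-term drops out and $(\star)$ reads $p=m+n-3=1$, which holds; the degree is insensitive to $q$ here, which is why the whole range $q\geq2$ is admissible (the value $q=1$ being covered by case (ii)). In case (ii), $q=1$, so the left side of $(\star)$ equals $p+(n-m)=(2m-3)+(n-m)=m+n-3$ for every $n\geq1$, as required. In case (iii), the defining formula for $q$ clears to $(m-n)q=p-m-n+3$; multiplying by $-1$ gives $(n-m)q=m+n-3-p$, whence $p+(n-m)q=m+n-3$, which is exactly $(\star)$.

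Finally I would account for the inequalities in (iii), which are bookkeeping for a legitimate pair of generator degrees rather than further degree conditions. Since $(\star)$ is one linear equation, its positive solutions split naturally according to whether $q=1$ or $q\geq2$ and according to the sign of $n-m$; solving $(\star)$ for $q$ when $n>m$ produces the displayed quotient. One checks that $q\geq2$ is equivalent to $n\leq3m-p-3$, and that $q\geq2$ together with $n\geq m+1$ forces $p\leq2m-4$, which with $p\geq1$ gives $m\geq3$; thus the stated ranges delimit precisely the region in which the quotient satisfies $q\geq2$ and is not already captured by (i) or (ii). The computation is elementary throughout, so I expect no deep obstacle; the only points needing care are recording $|xy|=p+q$ correctly in the degree count and then translating the single relation $(\star)$ into the inequalities of (iii) while correctly diverting the boundary cases $q=1$ and $n=m$ to cases (ii) and (i). I would also note that $q$ must be a positive integer for $H$ to exist as a graded module, an integrality side-condition separate from the degree identity $(\star)$.
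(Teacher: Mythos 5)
Your proof is correct and takes essentially the same approach as the paper: your master relation $(\star)$, namely $p+(n-m)q=m+n-3$, is just a rearrangement of the paper's equation $m(q+1)=n(q-1)+p+3$, and both arguments verify cases (i)--(iii) by direct substitution into this single linear relation. The only cosmetic difference is in case (iii), where the paper justifies the range $m+1\leq n\leq 3m-p-3$ by observing that $q(n)=(p-m-n+3)/(m-n)$ decreases from $2m-p-2$ to $2$ over that interval, while you derive the equivalent inequalities algebraically.
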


\begin{proof}
In general, $\left\vert \omega_{m}^{n}\right\vert =m+n-3$ if and only if
\begin{equation}
m\left(  q+1\right)  =n\left(  q-1\right)  +p+3. \label{one}%
\end{equation}
Thus $p,q\geq1$ implies $m\geq2.$\smallskip\newline(i) If $m=n=2,$ $p=1,$ and
$q\geq2,$ equation \ref{one} implies $\left\vert \omega_{2}^{2}\right\vert
=1$.\smallskip\newline(ii) If $m\geq2,$ $n\geq1,$ $p=2m-3,$ and $q=1,$
equation \ref{one} implies that $\left\vert \omega_{m}^{n}\right\vert
=m+n-3$.\smallskip\newline(iii) If $m\geq3,$ $1\leq p\leq2m-4,$ and $q\geq2,$
equation \ref{one} defines a rational function
\[
q\left(  n\right)  =\frac{p-m-n+3}{m-n},
\]
which decreases from $2m-p-2$ to $2$ as $n$ increases from $m+1$ to $3m-p-3$
(see Figure 6 below). Consequently $\left\vert \omega_{m}^{n}\right\vert
=m+n-3$ whenever $m+1\leq n\leq3m-p-3$ and $q\left(  n\right)  \in\mathbb{N}$.
\end{proof}

%

\begin{center}
\includegraphics[
trim=0.000000in 2.818712in 0.000000in 0.403122in,
height=2.7466in,
width=2.7198in
]%
{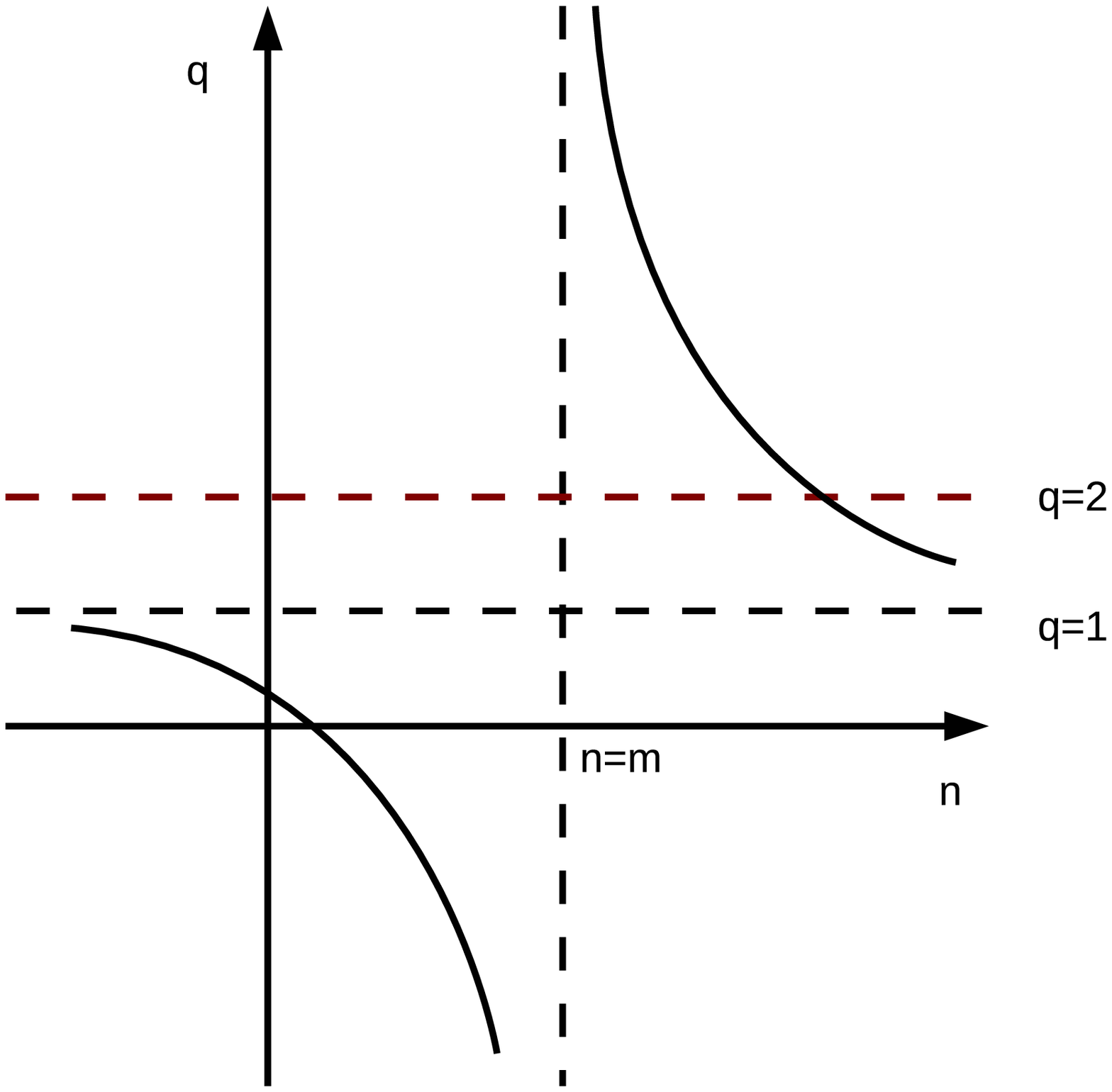}%
\\
Figure 6.
\end{center}

Our main result now follows. Note that when $H$ is a module over an exterior
$R$-algebra $\Lambda=E\left(  x\right)  $, the relation $yx|y=y|xy=\left(
-1\right)  ^{\left\vert x\right\vert \left\vert y\right\vert }x\left(
y|y\right)  $ holds in $TH$ since we tensor over $\Lambda.$ Thus $x^{2}=0$
implies $yx|yx=0.$

\begin{theorem}
\label{thm1}Let $m,$ $n,$ $p,$ and $q$ satisfy the hypotheses of Proposition
\ref{prop1}, and let $\Lambda=E\left(  x\right)  $ be an exterior $R$-algebra
generated by $x$ of degree $p.$ Let $H=E\left(  y\right)  $ be an exterior
$\Lambda$-algebra generated by $y$ of degree $q,$ let $\mu$ and $\Delta$
denote the trivial product and primitive coproduct on $H,$ and define
$\omega_{m}^{n}:H^{\otimes m}\rightarrow H^{\otimes n}$ by $\omega_{m}%
^{n}\left(  y|\cdots|y\right)  =xy|y|\cdots|y.$ Then $\left(  H,\Delta
,\mu,\omega_{m}^{n}\right)  $ is an $A_{\infty}$-bialgebra of type $\left(
m,n\right)  $.
\end{theorem}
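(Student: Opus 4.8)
The plan is to verify directly that $\omega_m^n$ satisfies each structure relation of Definition \ref{defn1}. Before doing so I would record the ambient algebra: as a $\Lambda$-module $H=E(y)$ is free on $\{1,y\}$, so as an $R$-module it is free on $\{1,x,y,xy\}$, and every tensor power $H^{\otimes k}$ is taken over $\Lambda=E(x)$. The crucial bookkeeping device is the \emph{one-$x$ rule}: since the tensor products are over $\Lambda$, any scalar $x$ in a tensor slot may be pulled to the front up to a Koszul sign, so a monomial of $TH$ carrying the factor $x$ in two or more slots equals $\pm x^{2}(\cdots)=0$ (this is the relation $yx|yx=0$ noted before the theorem). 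Two consequences drive the whole proof. First, because $\omega_m^n(y|\cdots|y)=xy|y|\cdots|y$ and $\omega_m^n$ is $\Lambda$-linear, $\omega_m^n$ annihilates any input already carrying an $x$, e.g. $\omega_m^n(xy|y|\cdots|y)=\pm x^{2}y|y|\cdots|y=0$. Second, any composite that applies $\omega_m^n$ \emph{twice} produces two $x$'s and hence vanishes.

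The second observation disposes of Relations 3, 4(a--d) together with the interpolating relations, 5(a--d), and 6 at a single stroke. Inspecting their shapes in Definition \ref{defn1}, every summand of each of these relations is a fraction in which $\omega_m^n$ is applied at least twice --- once in the numerator and once in the denominator in Relation 3 and in the $\mathrm b$,$\mathrm c$,$\mathrm d$ cases, and twice in one factor in 4a, 5a, and 6. Each such summand therefore contributes at least two factors of $x$ and is zero by the one-$x$ rule. Since it suffices to evaluate on the $\Lambda$-basis monomials in $\{1,y\}$, all of these relations hold identically with both sides zero; this is precisely why their sign formulas are irrelevant and may be omitted.

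It remains to check Relations 1 and 2, the only relations in which $\omega_m^n$ appears just once. For Relation 1 I would evaluate both sides on the generator $y|\cdots|y$. On the right, $\sigma_{2,m}\Delta^{\otimes m}(y|\cdots|y)$ is the sum over subsets $S\subseteq\{1,\dots,m\}$ of the monomial with $y$ in the positions of $S$ in the first tensor block and $y$ in the complementary positions in the second; applying $g_m\otimes\omega_m^n$ forces $S=\varnothing$, so $\omega_m^n$ sees $y|\cdots|y$ and $g_m$ returns $1$, while $\omega_m^n\otimes g_m$ forces $S=\{1,\dots,m\}$, leaving exactly $1|xy|y|\cdots|y$ and a multiple of $xy|y|\cdots|y|1$. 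On the left, using that both $y$ and $xy$ are primitive --- primitivity of $xy$ following from $\Lambda$-linearity of $\Delta$ --- the expression $\delta^{n}(xy|y|\cdots|y)=\sum_{i}(-1)^{i+1}\Delta_{i-1,n-i}(xy|y|\cdots|y)$ telescopes: the two ways of inserting a $1$ between consecutive slots cancel in adjacent summands, leaving only the boundary terms $1|xy|y|\cdots|y$ and $(-1)^{n+1}xy|y|\cdots|y|1$, matching the right-hand side. Relation 2 is dual: evaluating on the monomials $e_k=y^{\otimes k-1}|1|y^{\otimes m+1-k}$, triviality of $\mu$ makes $\partial_m(e_k)$ telescope to $0$ for $2\le k\le m$ and to $\pm\,y^{\otimes m}$ for $k=1,m+1$, while on the right only $f^n\otimes\omega_m^n$ survives on $e_1$ and only $\omega_m^n\otimes f^n$ survives on $e_{m+1}$, with the surviving values agreeing, including the sign $-(-1)^m$.

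I expect the genuine obstacle to be the sign bookkeeping in Relations 1 and 2: one must track the Koszul signs produced by the permutations $\sigma_{2,m}$ and $\sigma_{n,2}$, by the suspension and desuspension operators, and by the parities of $x$ and $y$, and then confirm that the telescoping leaves precisely the signs $-(-1)^n$ and $-(-1)^m$ recorded in Definition \ref{defn1}. Modelling this on the explicit $(2,3)$ verification of Example \ref{ex1}, now carried out over a general ring $R$ with the extra factor $x$ present, should keep the signs under control. Everything else reduces to the one-$x$ rule and the triviality of $\mu$, so no further structural input is needed.
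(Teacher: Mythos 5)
Your proposal is correct and follows essentially the same route as the paper's proof: Relations 1 and 2 are verified by direct evaluation on the basis monomials (all $y$'s, respectively the monomials with a single $1$), using triviality of $\mu$ and primitivity of $y$ and $xy$, while Relations 3--6 die because every summand applies $\omega_m^n$ twice and $x^2=0$ --- exactly the paper's argument, which you in fact state a bit more carefully (distinguishing where the two $\omega$'s sit in each relation). The only point you leave implicit is the degree check $\left\vert \omega_m^n\right\vert = m+n-3$, which the paper dispatches by citing Proposition \ref{prop1}, as your hypotheses already allow.
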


\noindent\textit{Proof. }The operation $\omega_{m}^{n}$ has the required
degree by Proposition \ref{prop1}. We must verify that the structure relations
in Definition \ref{defn1} hold:\smallskip

\noindent Relation 1 is non-trivial on $y|\cdots|y$:\smallskip

$(g_{m}\otimes\omega_{m}^{n}-(-1)^{n}\omega_{m}^{n}\otimes g_{m})\sigma
_{2,m}\Delta^{\otimes m}(y|\cdots|y)$\smallskip

$\hspace*{0.3in}=(g_{m}\otimes\omega_{m}^{n}-(-1)^{n}\omega_{m}^{n}\otimes
g_{m})\sigma_{2,m}[(1|y+y|1)|\cdots|(1|y+y|1)]$\smallskip

$\hspace*{0.3in}=g_{m}\left(  1|\cdots|1\right)  \otimes\omega_{m}^{n}\left(
y|\cdots|y\right)  -(-1)^{n}\omega_{m}^{n}\left(  y|\cdots|y\right)  \otimes
g_{m}\left(  1|\cdots|1|\right)  $\smallskip

$\hspace*{0.3in}=1|xy|y|\cdots|y+(-1)^{n+1}xy|y|\cdots|y|1=\delta
^{n}(xy|y|\cdots|y)=\delta^{n}\omega(y|\cdots|y).$\smallskip

\noindent Relation 2 is non-trivial on $y|y|\cdots|y|1$ and $1|y|\cdots|y,$
for example,\smallskip

$\mu^{\otimes n}\sigma_{n,2}(f^{n}\otimes\omega_{m}^{n}-(-1)^{m}\omega_{m}%
^{n}\otimes f^{n})(y|y|\cdots|y|1)$\smallskip

$\hspace*{0.3in}=(-1)^{m+1}\mu^{\otimes n}\sigma_{n,2}(xy|y|\cdots
|y|1|\cdots|1)$\smallskip

$\hspace*{0.3in}=(-1)^{m+1}xy|y|\cdots|y=\omega_{m}^{n}\partial_{m}%
(y|y|\cdots|y|1).$\smallskip

\noindent Relations 3, 4, 5 and 6 hold trivially since each term is a
composition of two tensors, each with a factor of $\omega_{m}^{n}$ whose
non-vanishing values have coefficient $x$. $\ \square\medskip$

Since odd dimensional spheres are rational $H$-spaces, the module
$H=$\linebreak$H_{\ast}\left(  S^{1};H_{\ast}\left(  S^{2m-3};\mathbb{Q}%
\right)  \right)  $ admits a non-trivial $A_{\infty}$-bialgebra structure of
type $\left(  m,n\right)  .$ Thus we ask\textbf{: }Does there exist an
$H$-space $X,$ a coefficient ring $\Lambda,$ and an induced operation
$\omega_{m}^{n}$ such that $\left(  H_{\ast}\left(  X;\Lambda\right)
,\Delta,\mu,\omega_{m}^{n}\right)  $ is an $A_{\infty}$-bialgebra?\medskip

\noindent\textbf{Acknowledgement. }We wish to thank Jim Stasheff for reading
an early draft of this paper and offering many helpful suggestions.

\end{document}